\newtheorem{theorem}{Theorem}[section]
\newtheorem{remark}{Remark}[section]
\newtheorem{corollary}[theorem]{Corollary}
\newtheorem{lemma}[theorem]{Lemma}
\newtheorem*{definition*}{Definition}
\newcommand\be{\begin{eqnarray*}}
\newcommand\ee{\end{eqnarray*}}
\newcommand\beq{\begin{equation}}
\newcommand\eeq{\end{equation}}
\newcommand\ben{\begin{eqnarray}}
\newcommand\een{\end{eqnarray}}
\def\E{\mathcal{E}}
\begin{document}
\title{A new bound on Erd\H{o}s distinct distances problem in the plane over prime fields
}
\author{A. Iosevich\thanks{Department of Mathematics, University of Rochester. Email: {\tt iosevich@math.rochester.edu}}\and
D. Koh\thanks{Department of Mathematics, Chungbuk National University. Email: {\tt koh131@chungbuk.ac.kr}}
\and 
    T. Pham\thanks{Department of Mathematics,  UCSD. Email: {\tt v9pham@ucsd.edu}}
  \and
  C-Y. Shen \thanks{Department of Mathematics,  National Taiwan University. Email: {\tt cyshen@math.ntu.edu.tw}}
  \and
  L. Vinh \thanks{Vietnam Institute of Educational Sciences . Email: {\tt vinhla@vnu.edu.vn}}}

\date{}
\maketitle
\date{}
\maketitle
\begin{abstract}
In this paper we obtain a new lower bound on the Erd\H{o}s distinct distances problem in the plane over prime fields. More precisely, we show that for any set $A\subset \mathbb{F}_p^2$ with $|A|\le p^{7/6}$ and $p\equiv 3\mod 4$, the number of distinct distances determined by pairs of points in $A$ satisfies 
$$ |\Delta(A)| \gtrsim |A|^{\frac{1}{2}+\frac{149}{4214}}.$$ 
Our result gives a new lower bound of $|\Delta{(A)}|$ in the range
$|A|\le p^{1+\frac{149}{4065}}$.

 The main tools in our method are the energy of a set on a paraboloid due to Rudnev and Shkredov, a point-line incidence bound given by Stevens and de Zeeuw, and a lower bound on the number of distinct distances between a line and a set in $\mathbb{F}_p^2$. The latter is the new feature that allows us to improve the previous bound due Stevens and de Zeeuw. 
\end{abstract}
\section{Introduction}

The celebrated Erd\H{o}s distinct distances problem asks for the minimum number of
distinct distances determined by a set of $n$ points in the plane over the real numbers. The breakthrough work of Guth and Katz \cite{guth} shows that a set of $n$ points in $\mathbb{R}^2$ determines at least $Cn/\log(n)$ distinct distances. The same problem can be considered in the setting of finite fields.

Let $\mathbb{F}_p$ be the prime field of order $p$. The ``distance" formula between two points $x=(x_1, x_2)$ and $y=(y_1, y_2)$ in $\mathbb{F}_p^2$ is defined by
\[||x-y||:=(x_1-y_1)^2+(x_2-y_2)^2.\]

While this is not a distance in the traditional sense, the definition above is a reasonable analog of the Euclidean distance in that it is invariant under orthogonal transformations. 

For $A\subset \mathbb{F}_p^2$, let 
$$ \Delta(A)=\{||x-y||: x,y \in A \}$$ and let $|\Delta(A)|$ denote its size. It has been shown in a remarkable paper of Bourgain, Katz, and Tao \cite{bkt} that if $|A|=p^\alpha$, $0<\alpha<2$, then we have 
$$ |\Delta(A)| \ge |A|^{\frac{1}{2}+\varepsilon},$$
for some $\varepsilon=\varepsilon(\alpha)>0$. 

This result has been quantified and improved over time. The recent work of Stevens and De Zeeuw \cite{ffrank} shows that 
\begin{equation}\label{S-Z} |\Delta(A)| \ge |A|^{\frac{1}{2}+\frac{1}{30}}=|A|^{\frac{8}{15}}, \end{equation}
under the condition $|A| \ll p^{\frac{15}{11}}$.  

Here and throughout, $X \ll Y$ means that there exists $c_1>0$, independent of $p$, such that $X \leq c_1Y$, $X \gtrsim Y$ means $X\gg (\log Y)^{-c_2} Y$ for some positive constant $c_2$, and $X\sim Y$ means that $c_3X\le Y\le c_4X$ for some positive constants $c_3$ and $c_4$.

For the case of large sets, Iosevich and Rudnev \cite{ir} used Fourier analytic methods to prove that for $A\subset \mathbb{F}_q^d$, where $q$ is not necessarily prime, with $|A|\ge 4q^{\frac{d+1}{2}}$, we have $\Delta(A)={\Bbb F}_q$. It was shown in \cite{hart} that the threshold $q^{\frac{d+1}{2}}$ cannot in general be improved when $d$ is odd, even if we wish to recover a positive proportion of all the distances in ${\Bbb F}_q$. In prime fields, the question is open in dimension $3$ and higher. In two dimensions, Chapman, Erdogan, Koh, Hart and Iosevich (\cite{CEHIK10}) proved that if $|A| \ge p^{\frac{4}{3}}$, $p$ prime, then $|\Delta(A)| \gg p$. In particular, their proof shows that if $ Cp\le |A|\le p^{4/3}$ for a sufficiently large $C>0,$ then 
\begin{equation}\label{CEKHIexponent} |\Delta(A)| \gg \frac{|A|^{3/2}}{p}.\end{equation}  The $4/3$ threshold  was extended to all (not necessarily prime) fields by Bennett, Hart, Iosevich, Pakianathan and Rudnev (\cite{BHIPR17}). We refer the reader to \cite{ir, hart} for further details.

The main purpose of this paper is to improve the exponent $\frac{1}{2}+\frac{1}{30}=\frac{8}{15}$ on the magnitude of $\Delta(A)$ when $A$ is a relatively small set in $\mathbb{F}_p^2$ with $p\equiv 3\mod 4$.  The main tools in our arguments are the energy of a set on a paraboloid due to Rudnev and Shkredov, a point-line incidence bound given by Stevens and de Zeeuw, and a lower bound on the number of distinct distances between a line and a set in $\mathbb{F}_p^2$. The following is our main result.
\bigskip
\begin{theorem}\label{thm1}
Let $\mathbb{F}_p$ be a prime field of order $p$ with $p\equiv 3\mod 4$. For $A\subset \mathbb{F}_p^2$ with $|A|\ll p^{\frac{7}{6}}$,  we have 
$$ |\Delta(A)| \gtrsim |A|^{\frac{1128}{2107}}=|A|^{\frac{1}{2} + \frac{149}{4214}}.$$
\end{theorem}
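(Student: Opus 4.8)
The plan is to bound $|\Delta(A)|$ from below by controlling the number of \emph{isosceles triangles} spanned by $A$, to recast that count as a weighted point--line incidence problem coming from perpendicular bisectors, and to use the new line--set distance inequality to handle the degenerate configurations that limited the Stevens--de Zeeuw argument.

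\emph{Step 1: reduction to an isosceles-triangle count.} For $a\in A$ and $t\in\mathbb{F}_p$ set $r_a(t)=|\{y\in A:\ ||a-y||=t\}|$ and let $T=\sum_{a\in A}\sum_t r_a(t)^2$ be the number of triples $(a,b,c)\in A^3$ with $||a-b||=||a-c||$. For each $a$ one has $\sum_t r_a(t)=|A|$ with at most $|\Delta(A)|$ nonzero terms, so Cauchy--Schwarz gives $\sum_t r_a(t)^2\ge|A|^2/|\Delta(A)|$; summing over $a$ yields $|\Delta(A)|\ge|A|^3/T$. It thus suffices to show $T\lesssim|A|^{3-(1/2+149/4214)}=|A|^{5193/2107}$.

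\emph{Step 2: perpendicular bisectors.} With $\langle x,y\rangle=x_1y_1+x_2y_2$, so that $||x-y||=||x||-2\langle x,y\rangle+||y||$, the locus $\{a:\ ||a-b||=||a-c||\}$ for $b\ne c$ is the affine line $\ell_{b,c}:\ 2\langle a,b-c\rangle=||b||-||c||$, which is a genuine line since the form is non-degenerate and $b-c\ne 0$. Grouping the off-diagonal triples counted by $T$ according to this line gives $T=|A|^2+\sum_\ell\mu(\ell)\,|A\cap\ell|$, where $\mu(\ell)$ counts ordered pairs $(b,c)\in A^2$, $b\ne c$, with $\ell_{b,c}=\ell$. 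Since $\ell_{b,c}=\ell$ exactly when $c$ is the orthogonal reflection $\sigma_\ell(b)$ of $b$ across $\ell$, we get $\mu(\ell)=|A\cap\sigma_\ell(A)|-|A\cap\ell|$ and $\sum_\ell\mu(\ell)=|A|^2-|A|$; so $T-|A|^2$ is an incidence count of the points of $A$ against the perpendicular-bisector lines, weighted by these multiplicities.

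\emph{Step 3: dyadic decomposition and the three inputs.} Pigeonhole the lines by $|A\cap\ell|\sim K$ and $\mu(\ell)\sim M$; writing $n_{K,M}$ for their number, $\sum_\ell\mu(\ell)|A\cap\ell|\lesssim(\log|A|)^2\max_{K,M}KM\,n_{K,M}$, and we estimate $n_{K,M}$ in three complementary ways. (i) From $\sum_\ell\mu(\ell)=|A|^2-|A|$ we get $n_{K,M}\lesssim|A|^2/M$. (ii) Each such line is $K$-rich in $A$, and since $|A|\ll p^{7/6}$ lies well inside its admissible range the Stevens--de Zeeuw point--line incidence bound caps the number of incidences $\sim K\,n_{K,M}$ and hence bounds $n_{K,M}$ in terms of $K$ and $|A|$. (iii) When $M$ is large a substantial part of $A$ reflects into $A$ across $\ell$, and here we invoke the new lower bound on distinct distances between a line and a set, applied to $\ell$ and $B:=A\cap\sigma_\ell(A)$ --- together with the elementary fact, valid because $p\equiv3\bmod 4$, that any $k$ collinear points span $\gg k$ distinct distances (a size-$k$ difference set in $\mathbb{F}_p$ contains $\gg k$ squares, and no line direction is isotropic) --- to bound the total contribution of such lines (and to conclude directly if a single one is very rich). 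In parallel, the circle--point incidences underlying $T$ lift, via $x\mapsto(x_1,x_2,x_1^2+x_2^2)$, to plane--point incidences on a paraboloid in $\mathbb{F}_p^3$, and the Rudnev--Shkredov estimate for the energy of a set on that paraboloid furnishes a further upper bound for $T$ that is effective in the complementary range of $K$ and $M$.

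\emph{Step 4: optimisation.} Combining (i), (ii), (iii) with the Rudnev--Shkredov bound and balancing over the dyadic parameters $K,M$ --- equivalently, over the regimes ``$A$ has many collinear points'', ``$A$ is close to symmetric about a line'', and the generic case --- while tracking the hypothesis $|A|\ll p^{7/6}$, which is precisely what renders every error term of shape $|A|^{\bullet}/p^{\bullet}$ negligible, produces $T\lesssim|A|^{5193/2107}$ and hence $|\Delta(A)|\gtrsim|A|^{1128/2107}=|A|^{1/2+149/4214}$. The main obstacle is Step 3(iii): establishing the line--set distance inequality with an exponent strong enough that the near-symmetric configurations --- which were the bottleneck in the Stevens--de Zeeuw argument --- become negligible; once that is in hand, the rest is the long but essentially routine optimisation of Step 4.
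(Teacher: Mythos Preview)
Your overall architecture --- isosceles count, perpendicular bisectors, dyadic decomposition --- matches the paper, and you correctly name the three inputs (Stevens--de Zeeuw incidences, Rudnev--Shkredov paraboloid energy, and the new line--set distance theorem). But the roles you assign to the last two of these are not the ones that actually make the argument go through, and Step~3(iii) as written has a gap.

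First, the line--set distance bound. The paper invokes Theorem~\ref{thm-any} (via Corollary~\ref{co99}) only in the case where some line $\ell$ already contains $\ge |A|^{7/15+\epsilon}$ points of $A$: one takes $P_1=A\cap\ell$, $P_2=A$, and is done. You propose instead to apply it when $\mu(\ell)$ is large, via the reflection identity $\mu(\ell)\approx|A\cap\sigma_\ell(A)|$. But Theorem~\ref{thm-any} requires $P_1$ to lie \emph{on} $\ell$, and a large $|A\cap\sigma_\ell(A)|$ gives no control on $|A\cap\ell|$; it is not clear what $P_1$ would be in your application, nor why the resulting distances lie in $\Delta(A)$.

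Second, and more seriously, Rudnev--Shkredov is not used in the paper as ``a further upper bound for $T$''. Its role is the opposite: once one restricts to the lines $L_2$ with $|A\cap\ell|\in[|A|^{7/15-\epsilon},|A|^{7/15+\epsilon}]$ and assumes their total multiplicity $\sum_{l\in L_2}m(l)$ is large, Cauchy--Schwarz against $E(Q)$ \emph{lower}-bounds the number of \emph{distinct} parameter vectors $(a-b,\,||a||-||b||)$ occurring among those lines. Separately, Stevens--de Zeeuw upper-bounds $|L_2|$. Pigeonholing then forces a single line to carry $m'(\ell)\gg |A|^{4/7-O(\epsilon)}$ distinct such vectors, all scalar multiples of one another, and now your ``$\lambda^2$ on a line'' observation (this is indeed where $p\equiv 3\bmod 4$ enters) finishes: $|\Delta(A)|\gg m'(\ell)$. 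This chain --- energy $\Rightarrow$ many distinct vectors $\Rightarrow$ pigeonhole onto one line $\Rightarrow$ $\lambda^2$ scaling --- is the heart of the paper's Case~2 and is not present in your sketch.

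In short, the ingredients are right but the assembly is not. The optimisation in Step~4 only produces the exponent $1128/2107$ once the line--set bound is placed in the rich-line case and the Rudnev--Shkredov bound is routed through the distinct-parameter-vector/pigeonhole mechanism rather than used as a direct estimate on $T$.
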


\begin{remark} The Stevens-de Zeeuw exponent in \eqref{S-Z} is $.533...$, whereas our exponent is $.535358...$. Thus our result is better than that of the Stevens-de Zeeuw in the range $|A|\ll p^{7/6}.$ On the other hand, our result is superior to \eqref{CEKHIexponent} in the range $|A|\le p^{\frac{1}{\frac{3}{2}-\frac{1128}{2107}}}=p^{\frac{4214}{4065}}.$ 
In conclusion, Theorem \ref{thm1} improves the currently known distance results in the range $|A|\ll p^{\frac{4214}{4065}}.$
 \end{remark} 

\vskip.125in 

\begin{remark} While our improvement over the Steven-de Zeeuw estimate is small, we introduce a new idea, namely the count for the number of distances between a line and a set. This should lead to further improvements in the exponent in the future. \end{remark} 

The rest of the paper is devoted to prove Theorem \ref{thm1}, and we always assume that $p\equiv 3\mod 4$. 

\paragraph{Acknowledgments:}The authors would like to thank  Frank de Zeeuw for useful discussions and comments. A. Iosevich was partially supported by the NSA Grant H98230-15-1-0319. D. Koh was supported by Basic Science Research Programs through National Research Foundation of Korea funded by the Ministry of Education (NRF-2018R1D1A1B07044469). T. Pham was supported by Swiss National Science Foundation
grant P400P2-183916. C-Y Shen was supported in part by MOST, through grant 104-2628-M-002-015 -MY4.

\section{Proof of Theorem \ref{thm1}}
To prove Theorem \ref{thm1} we make use of the following lemmas. The first lemma is a point-line incidence bound due to Stevens and De Zeeuw in \cite{ffrank}. 
\bigskip
\begin{lemma}[\cite{ffrank}]\label{ff}
Let $P$ be a set of $m$ points in $\mathbb{F}_p^2$ and $L$ be a set of $n$ lines in $\mathbb{F}_p^2$. Suppose that $m^{7/8}\le n\le m^{8/7}$ and $m^{-2}n^{13}\ll p^{15}.$ Then we have 
\[I(P, L)=\#\{ (p,\ell) ; p \in P, \ell \in L   \}\ll m^{11/15}n^{11/15}.\]
\end{lemma}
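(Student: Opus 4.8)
The statement is due to Stevens and de Zeeuw \cite{ffrank}, and the plan is to reproduce their argument, which deduces it from Rudnev's point--plane incidence theorem in $\mathbb{F}_p^3$: \emph{if $\mathcal{P}$ is a set of points and $\mathcal{Q}$ a set of planes in $\mathbb{F}_p^3$ with $|\mathcal{P}|\le|\mathcal{Q}|$ and $|\mathcal{P}|\ll p^2$, and no line of $\mathbb{F}_p^3$ contains more than $\kappa$ points of $\mathcal{P}$, then}
\[ I(\mathcal{P},\mathcal{Q})\ \ll\ |\mathcal{P}|^{1/2}\,|\mathcal{Q}|+\kappa\,|\mathcal{Q}|, \]
\emph{and symmetrically with the roles of points and planes exchanged.}

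First I would carry out the standard reductions on the pair $(P,L)$. The vertical lines are incident to at most $m+n$ points of $P$ in total, which is negligible in the stated range, so they may be discarded and the remaining lines written as $y=sx+r$; discarding the lines meeting $P$ in fewer than $\tfrac12\,I(P,L)/n$ points costs at most half of all incidences, and after a dyadic pigeonholing one may assume that every line in the working family meets $P$ in between $t$ and $2t$ points, so $I(P,L)\sim nt$ and it remains to bound $t$ from above.

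The heart of the matter is to realise a higher power--moment $\sum_{\ell}|P\cap\ell|^{\,j}\sim n t^{\,j}$ (with $j=2$ or $j=3$) of the line richnesses as a point--plane incidence count in $\mathbb{F}_p^3$. Using the $3\times3$ collinearity determinant, three points of $P$ are collinear exactly when an explicit lift to $\mathbb{F}_p^3$ of the data attached to one of them lies on the plane determined by the other two, and by grouping this data suitably one obtains a point set and a plane set in $\mathbb{F}_p^3$ --- one of size $\sim m$, the other of size comparable to $n$, $nt$, or $nt^{2}$ according to the bookkeeping --- whose incidence count is $\sim n t^{\,j}$. One then applies Rudnev's bound to this configuration: the richness parameter $\kappa$ is controlled by the elementary estimate for the maximum number of collinear points of $P$ (using that only $t$--rich lines occur), and the hypothesis $m^{-2}n^{13}\ll p^{15}$ is what guarantees that the relevant point set has size $\ll p^{2}$, so that Rudnev's theorem applies. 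Feeding the resulting inequality for $n t^{\,j}$ back in, solving for $t$, and recalling $I(P,L)=nt$ yields $I(P,L)\ll m^{11/15}n^{11/15}$.

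The step I expect to be the genuine obstacle is the balancing, inside Rudnev's estimate, between the main term $|\mathcal{P}|^{1/2}|\mathcal{Q}|$ and the error term $\kappa\,|\mathcal{Q}|$: the window $m^{7/8}\le n\le m^{8/7}$ is precisely the range in which, after the optimisation, the main term dominates, and arranging that this error term --- together with the contributions of vertical lines, poor lines, and coincidences in the three--dimensional lift --- stays below the main term is exactly what forces the exponent $\tfrac{11}{15}$, the balance condition $m^{7/8}\le n\le m^{8/7}$, and the field--size condition $m^{-2}n^{13}\ll p^{15}$. For the proof of Theorem~\ref{thm1} we will simply use the bound in the form stated above, referring to \cite{ffrank} for the details.
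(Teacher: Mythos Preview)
The paper does not prove this lemma: it is stated with attribution to Stevens and de Zeeuw and cited as \cite{ffrank}, with no argument given. Your proposal likewise concludes by deferring to \cite{ffrank} for the details, so in the end your treatment agrees with the paper's --- both simply quote the result.

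As for the sketch you supply along the way: the engine is indeed Rudnev's point--plane theorem in $\mathbb{F}_p^3$, but the route you describe --- lifting collinear triples of an arbitrary point set via the $3\times3$ collinearity determinant and bounding a third moment of line richnesses --- is not how Stevens and de Zeeuw actually obtain the exponent $11/15$. Their argument first proves a sharper bound for point sets of Cartesian-product form $A\times B$, by encoding pairs of collinear points that share a coordinate as point--plane incidences in $\mathbb{F}_p^3$; the bound for an arbitrary point set $P$ is then deduced by pigeonholing $P$ into its richest axis-parallel fibres, embedding those in a grid $A\times B$, applying the Cartesian bound, and optimising over the number of fibres retained. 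The generic collinear-triples lift you outline does not by itself reach $m^{11/15}n^{11/15}$; the Cartesian structure is what makes the exponent work. None of this matters for the present paper, since the lemma is used only as a black box.
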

\bigskip
Let $P$ be a paraboloid in $\mathbb{F}_p^3$. For $Q\subset P$, let $E(Q)$ be the additive energy of the set $Q$, namely, the number of tuples $(a, b, c, d)\in Q^4$ such that $a-b=c-d.$ Using Pach and Sharir's argument in \cite{pach} and Lemma \ref{ff}, Rudnev and Shkredov \cite{R-S} derived an upper bound of $E(Q)$ as follows.
\bigskip
\begin{lemma}[\cite{R-S}]\label{rud}
Let $P$ be a paraboloid in $\mathbb{F}_p^3$. For $Q\subset P$ with $|Q|\ll p^{26/21}$, we have 
\[E(Q)\ll |Q|^{17/7}.\]
\end{lemma}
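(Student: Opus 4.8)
The plan is to strip off the paraboloid structure of $Q$ by projecting to the plane, turning $E(Q)$ into a planar counting problem for the set $A:=\pi(Q)$ (where $\pi$ is the vertical projection, a bijection on the paraboloid, so $|A|=|Q|$), and then to estimate that quantity by the Pach--Sharir rich-lines method with Lemma~\ref{ff} as input.

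After an affine normalization we may take $P=\{(t,\|t\|):t\in\F_p^2\}$, $\|t\|=t_1^2+t_2^2$, and $Q=\{(t,\|t\|):t\in A\}$. Expanding $E(Q)=\#\{(a,b,c,d)\in Q^4:a-b=c-d\}$ and using that the third coordinate of a point of $P$ is $\|\cdot\|$, the equation on the third coordinates (after expanding $\|t_a\|-\|t_b\|$ and the corresponding difference) reduces to $(t_a-t_b)\cdot(t_a-t_c)=0$. Thus $E(Q)$ is exactly the number of ``orthogonal additive quadruples'' of $A$, and, writing $u=t_a-t_b$ and $P_u=A\cap(A+u)$, one gets the identity
\[E(Q)=|A|^2+\sum_{u\neq 0}\;\sum_{\ell\,\parallel\,u^{\perp}}\big|P_u\cap\ell\big|^{2},\]
the inner sum running over the $p$ lines in the direction $u^{\perp}$ (well defined; here $p\equiv 3\bmod 4$ also guarantees no line is isotropic, which is convenient below). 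Equivalently, completing the square the other way, $E(Q)=\sum_C|B_C|^2$ where $B_C\subseteq A\cap C$ runs over a $3$-parameter family of circles and $\sum_C|B_C|=|A|^2$; either form serves. Recording this unfolding, and checking the degenerate terms contribute only $O(|A|^2)$, is the first step.

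Next comes the Pach--Sharir dyadic pigeonholing. Fix a threshold $T$. The ``poor'' contribution (lines with $|A\cap\ell|\le T$) is handled by the elementary estimate $|P_u\cap\ell|^2\le|A\cap\ell|\,|P_u\cap\ell|\le T|P_u\cap\ell|$ together with $\sum_\ell|P_u\cap\ell|=|P_u|$ and $\sum_u|P_u|=|A|^2$, giving $\lesssim T|A|^2$. The ``rich'' contribution is estimated by summing over dyadic scales $s>T$: for each scale one bounds the number of $s$-rich lines of $A$ by $\lesssim|A|^{11/4}s^{-15/4}$ via Lemma~\ref{ff}, and bounds the contribution of a single $s$-rich line $\ell$ using that the $s$ lines through the points of $A\cap\ell$ perpendicular to $\ell$ are pairwise disjoint. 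Balancing the threshold $T$ (a suitable power of $|A|$) against the two estimates then produces the bound $E(Q)\lesssim|A|^{17/7}$.

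The hypothesis $|Q|\ll p^{26/21}$, and also the main difficulty, live in this last step. On one hand, Lemma~\ref{ff} is valid only when the number $N$ of lines to which it is applied obeys $m^{7/8}\le N\le m^{8/7}$ and $m^{-2}N^{13}\ll p^{15}$ (with $m=|A|$); in the dyadic decomposition $N$ is a function of $|A|$ and $s$, and the constraint $|A|=|Q|\ll p^{26/21}$ is exactly what makes $m^{-2}N^{13}\ll p^{15}$ hold across the range of $s$ that matters, the complementary very-rich lines (which are few) being disposed of by the elementary identity $\sum_\ell|A\cap\ell|^2=|A|^2+(p+1)|A|$. On the other hand, a careless per-line estimate (for instance, ``a line with $k$ points contributes about $k|A|$'') overshoots the target exponent; one has to play the two available estimates for each rich line off against the Stevens--de Zeeuw count and against the trivial bound $\sum_C|B_C|=|A|^2$, all while respecting the range conditions. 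Threading this so that every exponent closes to exactly $\tfrac{17}{7}$ is the delicate core of the proof; the normalization, the unfolding identity, and the dyadic pigeonholing are routine.
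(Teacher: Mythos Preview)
The paper does not prove this lemma; it is quoted from Rudnev--Shkredov, with only the one-line remark that their proof combines Pach and Sharir's argument with Lemma~\ref{ff}. Your outline is consistent with that description, and the preliminary steps are correct: projecting $Q$ bijectively to $A\subset\F_p^2$, unfolding the third-coordinate equation to $(t_a-t_b)\cdot(t_a-t_c)=0$, and recording the identity
\[
E(Q)=|A|^2+\sum_{u\neq 0}\;\sum_{\ell\perp u}\big|P_u\cap\ell\big|^{2}
\]
(with the equivalent circle formulation $E(Q)=\sum_C\mu(C)^2$, $\sum_C\mu(C)=|A|^2$) are exactly how the Rudnev--Shkredov argument begins.

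The gap is that you never carry out the step that actually yields the exponent $\tfrac{17}{7}$. The scheme you spell out---bound the number of $s$-rich lines of $A$ by $\ll|A|^{11/4}s^{-15/4}$ via Lemma~\ref{ff}, bound the contribution of a single $s$-rich line $\ell$ by $\sum_{u\perp\ell}|P_u\cap\ell|^2\le |A\cap\ell|\sum_{u}|P_u\cap\ell|\le s|A|$ using disjointness of the perpendiculars $m_a$, and balance against the poor contribution $T|A|^2$---optimises at $T=|A|^{7/11}$ and delivers only $E(Q)\ll|A|^{29/11}$. Passing instead through the right-angle count $R(A)\ge E(Q)$ and running the same rich/poor split with a H\"older step improves this to $|A|^{37/15}$, but this is still strictly weaker than $|A|^{17/7}$. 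You yourself flag that the ``careless per-line estimate \dots\ overshoots'' and that ``threading this so that every exponent closes to exactly $\tfrac{17}{7}$ is the delicate core of the proof'', but you do not say what the two estimates to be played off are, nor how the balancing goes. That delicate core is the entire content of the Rudnev--Shkredov computation; as written, your proposal is a correct reduction followed by an unexplained black box where the main work should be.
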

\bigskip
In the following theorem, we give a lower bound on the number of distinct distances between a set on a line and an arbitrary set in $\mathbb{F}_p^2$. This will be a crucial step in the proof of Theorem \ref{thm1}.  The precise statement is as follows. 
\bigskip
\begin{theorem}\label{thm-any}
Let $l$ be a line in $\mathbb{F}_p^2$, $P_1$ be a set of points on $l$, and $P_2$ be an arbitrary set in $\mathbb{F}_p^2$. Suppose that $|P_1|^{\frac{4}{7}}< |P_2|\ll p^{\frac{7}{6}}.$ Then the number of distinct distances between $P_1$ and $P_2$, denoted by $|\Delta(P_1, P_2)|$, satisfies
$$ |\Delta(P_1, P_2)| \gtrsim \min\left\lbrace |P_1|^{\frac{4}{11}} |P_2|^{\frac{4}{11}}, ~|P_1||P_2|^{1/8}, |P_2|^{7/8}, |P_1|^{-1}|P_2|^{8/7}\right\rbrace.$$
\end{theorem}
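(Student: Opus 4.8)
The plan is to set up a point–line incidence problem whose incidence count is, on one hand, bounded below by a sum over distances (via a pigeonholing/Cauchy–Schwarz argument on the distance function restricted to the line $l$), and, on the other hand, bounded above by the Stevens–de Zeeuw incidence bound in Lemma~\ref{ff} together with the energy bound in Lemma~\ref{rud}. Concretely, for each point $y\in P_2$ and each value $t\in\Delta(P_1,P_2)$, the set of points $x\in l$ with $\|x-y\|=t$ is the intersection of $l$ with a ``circle'' of radius $t$ centered at $y$; since $p\equiv 3\bmod 4$ these circles are genuine conics with no isotropic lines through the center, so each such intersection has at most $2$ points. This gives the basic double-counting identity
$$
|P_1|\,|P_2| \;=\; \sum_{x\in P_1}\sum_{y\in P_2} 1 \;\le\; 2\sum_{y\in P_2}\ |\{t\in\Delta(P_1,P_2): \exists x\in P_1,\ \|x-y\|=t\}|,
$$
but the real content comes from estimating the second moment, i.e. the number of collinear ``isosceles'' configurations $(x,x',y)$ with $x,x'\in P_1$, $y\in P_2$, $\|x-y\|=\|x'-y\|$.

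First I would parametrize: write $l$ as $\{(s,as+b):s\in\mathbb{F}_p\}$ (handling a vertical line separately), so that a point $x\in P_1$ corresponds to a scalar $s$, and $\|x-y\|$ becomes, for fixed $y$, a quadratic polynomial in $s$ with leading coefficient $1+a^2\neq 0$ (again using $p\equiv 3\bmod 4$ so that $-1$ is not a square, hence $1+a^2\neq 0$). Completing the square, $\|x-y\|=t$ has a solution in $s$ iff a certain shifted square equals a value depending on $y$ and $t$; this reduces counting pairs $x,x'\in P_1$ at equal distance from $y$ to an additive-combinatorial statement about the set $S=\{s^2 : s \text{ indexes a point of } P_1\}$ — more precisely to incidences between the points $\{(s, s^2)\}\subset P$ on the parabola $P$ and a family of lines indexed by $P_2$ (each $y$ giving one line, since the ``distance from $y$'' as a function along $l$ is an affine function of $(s,s^2)$ after completing the square). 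This is exactly the shape to which Lemma~\ref{ff} and Lemma~\ref{rud} apply.

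Next, with $Q=\{(s,s^2)\}$ the lift of $P_1$ to the parabola, $|Q|=|P_1|$, I would use Lemma~\ref{rud} (valid since $|P_1|\ll p^{7/6}\ll p^{26/21}$, or adjusting constants) to control $E(Q)\ll |P_1|^{17/7}$, and then estimate the number of collinear isosceles triples by a Cauchy–Schwarz argument: the number of triples $(x,x',y)$ with $\|x-y\|=\|x'-y\|$ is $\sum_y r(y)^2$ where $r(y)$ counts points of $P_1$ at a given completed-square value, and $\sum_y r(y)^2$ is bounded by $I(P_1^{\text{lifted}},\mathcal L)$ for the line family $\mathcal L$ of size $|P_2|$, to which Lemma~\ref{ff} gives $\ll |P_1|^{11/15}|P_2|^{11/15}$ in the balanced range $|P_1|^{7/8}\le |P_2|\le |P_1|^{8/7}$, while outside that range I use the trivial Cauchy–Schwarz/Cauchy bounds $I\ll |P_1|+|P_2|^{1/2}|P_1|$ etc., or the Rudnev-Shkredov energy bound directly. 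Combining a lower bound
$$
\frac{(|P_1|\,|P_2|)^2}{|\Delta(P_1,P_2)|}\ \lesssim\ \#\{\text{collinear isosceles triples}\}
$$
(obtained by Cauchy–Schwarz over the distance values, exactly as in the classical distance–energy argument, where each distance class contributes, and the factor coming from $l$-circle intersections being $\le 2$ is absorbed into constants) with the upper bounds on the triple count yields, after rearranging, the claimed $\min$ of the four quantities $|P_1|^{4/11}|P_2|^{4/11}$, $|P_1||P_2|^{1/8}$, $|P_2|^{7/8}$, $|P_1|^{-1}|P_2|^{8/7}$ — each term corresponding to one of the regimes (balanced Stevens–de Zeeuw, unbalanced small-$P_2$, unbalanced large-$P_2$, and the energy-dominated regime), with the hypothesis $|P_1|^{4/7}<|P_2|$ ensuring we are past the trivial range and the factor $p^{7/6}$ the validity threshold in Lemmas~\ref{ff}–\ref{rud}.

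The main obstacle I anticipate is bookkeeping the case analysis so that the four terms of the $\min$ appear cleanly: one must carefully verify that the incidence hypotheses $m^{7/8}\le n\le m^{8/7}$ and $m^{-2}n^{13}\ll p^{15}$ of Lemma~\ref{ff} hold in the relevant subranges (and supply the correct elementary bound when they do not, which is where the $|P_1||P_2|^{1/8}$ and $|P_2|^{7/8}$ terms come from), and that the map $x\mapsto\|x-y\|$ restricted to $l$ really is, after the $p\equiv 3\bmod 4$ reduction, affine-linear in the parabola coordinate so that Lemma~\ref{ff} is applicable with $P$ a genuine parabola and $\mathcal L$ genuine lines. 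A secondary subtlety is degenerate $y$ (those for which the associated line is vertical or the quadratic degenerates), but these form a lower-order set and can be discarded at the cost of constants.
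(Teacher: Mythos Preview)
Your setup has the roles of $P_1$ and $P_2$ reversed, and this is a genuine gap rather than a bookkeeping issue. You propose to bound the isosceles triples $(x,x',y)$ with $x,x'\in P_1$ on the line $l$ and apex $y\in P_2$. But for fixed $y$, the map $s\mapsto \|x(s)-y\|$ along $l$ is a nondegenerate quadratic in $s$ (this is exactly your ``$1+a^2\neq 0$'' observation), so each distance value is attained by at most two points of $P_1$. Hence the triple count you aim to estimate is trivially $\le 2|P_1||P_2|$, no incidence theorem needed; feeding this into Cauchy--Schwarz yields only $|\Delta(P_1,P_2)|\gg |P_1|$, which does not imply the stated four-term minimum once $|P_2|>|P_1|^{7/4}$. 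Relatedly, your sentence ``$\sum_y r(y)^2$ is bounded by $I(P_1^{\text{lifted}},\mathcal L)$ with $|\mathcal L|=|P_2|$'' does not describe a well-posed incidence problem: the quantity $r$ necessarily depends on both $y$ and the distance value $t$, so a single line per $y$ cannot encode the triple count, and the bound $|P_1|^{11/15}|P_2|^{11/15}$ you quote does not arise from any correct reduction here.

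The paper does the opposite: it bounds $T=\sum_{x\in P_1}\mathcal E(P_2,x)$, the number of triples $(x,y,y')$ with apex $x\in P_1$ on the line and $y,y'\in P_2$. For this count the circle--line intersection triviality is unavailable, and one genuinely needs incidence bounds. The reduction lifts $P_2$ (not $P_1$) to the point set $\{(b,a^2+b^2):(a,b)\in P_2\}$ and takes lines indexed by pairs $(x,\lambda)\in P_1\times\mathbb F_p$ with $r_{P_2}(x,\lambda)$ above a threshold $t=|P_2|^{7/11}/|P_1|^{4/11}$; a dyadic decomposition of these lines by richness, handled with Lemma~\ref{ff} in the balanced range and Lemma~\ref{cauchy-incidence} in the extreme ranges, gives $T\lesssim |P_1|^{7/11}|P_2|^{18/11}+|P_2|^{15/8}$. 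Two applications of Cauchy--Schwarz (over distances, then over $P_1$) turn this into the first two terms of the minimum, while the last two terms $|P_2|^{7/8}$ and $|P_1|^{-1}|P_2|^{8/7}$ enter only as \emph{a priori} assumptions that rule out the degenerate dyadic ranges. Note in particular that Lemma~\ref{rud} plays no role in this theorem; it is used elsewhere in the paper, and invoking $E(Q)\ll|P_1|^{17/7}$ for the lift of $P_1$ does not connect to the quantity you need.
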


We will provide a detailed proof of Theorem \ref{thm-any} in Section $3$.  The following is a direct consequence from Theorem \ref{thm-any}. 
\bigskip

\begin{corollary}\label{co99}
Let $A\subset \mathbb{F}_p^2$ with $|A|\ll p^{7/6}$. Suppose there is a line containing at least $|A|^{\frac{7}{15}+\epsilon}$ points from $A$. Then we have 
$$ |\Delta(A)| \gtrsim \min \{|A|^{\frac{8}{15}+\frac{4\epsilon}{11}}, |A|^{\frac{8}{15}+\frac{1}{7}-\epsilon}\}.$$
\end{corollary}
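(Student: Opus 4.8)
The plan is to deduce Corollary~\ref{co99} directly from Theorem~\ref{thm-any}, applied with $P_2 = A$ and with $P_1$ a suitably trimmed subset of the rich line. Let $l$ be a line with $|A \cap l| \ge |A|^{7/15+\epsilon}$, and fix $P_1 \subseteq A \cap l$ with $|P_1| = \lceil |A|^{7/15+\epsilon}\rceil$ (possible by hypothesis); set $P_2 = A$. Every value $||x-y||$ with $x \in P_1 \subseteq A$ and $y \in P_2 = A$ belongs to $\Delta(A)$, so $|\Delta(A)| \ge |\Delta(P_1, P_2)|$, and it remains only to bound $|\Delta(P_1,P_2)|$ from below.

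To apply Theorem~\ref{thm-any} I would first check its two size conditions: $|P_2| = |A| \ll p^{7/6}$ is the standing hypothesis of the corollary, and $|P_1|^{4/7} \sim |A|^{4/15 + 4\epsilon/7} < |A| = |P_2|$ holds for the (small) $\epsilon$ of interest, since $4/15 + 4\epsilon/7 < 1$ whenever $\epsilon < 77/60$ — and in fact $\epsilon \le 8/15$ is forced by $|A\cap l|\le|A|$. Then I would substitute $|P_1| \sim |A|^{7/15+\epsilon}$, $|P_2| = |A|$ into the four-term minimum. The first term is $|P_1|^{4/11}|P_2|^{4/11} \sim |A|^{\frac{4}{11}(1+\frac{7}{15}+\epsilon)} = |A|^{\frac{88}{165}+\frac{4\epsilon}{11}} = |A|^{\frac{8}{15}+\frac{4\epsilon}{11}}$, and the fourth term is $|P_1|^{-1}|P_2|^{8/7} \sim |A|^{\frac{8}{7}-\frac{7}{15}-\epsilon} = |A|^{\frac{71}{105}-\epsilon} = |A|^{\frac{8}{15}+\frac17-\epsilon}$ — these are exactly the two exponents in the statement. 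The remaining two terms are never the smallest: the second, $|P_1||P_2|^{1/8} \sim |A|^{\frac{71}{120}+\epsilon}$, has exponent larger than $\frac{8}{15}+\frac{4\epsilon}{11} = \frac{64}{120}+\frac{4\epsilon}{11}$ because $\frac{71}{120}>\frac{64}{120}$ and $\epsilon\ge\frac{4\epsilon}{11}$; and the third, $|P_2|^{7/8} = |A|^{7/8}$, clearly dominates $|A|^{\frac{8}{15}+\frac{4\epsilon}{11}}$ for small $\epsilon$. Hence the four-term minimum collapses to $\min\{|A|^{\frac{8}{15}+\frac{4\epsilon}{11}},\, |A|^{\frac{8}{15}+\frac17-\epsilon}\}$, as claimed.

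The only point needing care — and the main thing I would worry about — is the size of $P_1$: the bound in Theorem~\ref{thm-any} is \emph{not} monotone in $|P_1|$, since its fourth term $|P_1|^{-1}|P_2|^{8/7}$ \emph{decreases} as $|P_1|$ grows. Thus one cannot simply take $P_1 = A\cap l$: if that set were much larger than $|A|^{7/15+\epsilon}$, the fourth term would fall below the claimed bound. Trimming $P_1$ down to exactly $\sim |A|^{7/15+\epsilon}$ points is precisely what makes the first and fourth terms sit at the stated exponents, and everything else is routine exponent arithmetic.
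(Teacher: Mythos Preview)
Your proposal is correct and is exactly the intended derivation: the paper states Corollary~\ref{co99} as ``a direct consequence from Theorem~\ref{thm-any}'' without further detail, and you have supplied precisely those details. Your observation about trimming $P_1$ down to $\sim |A|^{7/15+\epsilon}$ points (rather than using all of $A\cap l$) is a genuine and necessary point that the paper leaves implicit; the rest is routine exponent arithmetic that you have carried out correctly.
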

The above corollary shows that the exponent $8/15$ in \eqref{S-Z} due to  Stevens and De Zeeuw is improved when $A$ contains many points on a line.

\bigskip
We are now ready to prove Theorem \ref{thm1}.
\bigskip
\paragraph{Proof of Theorem \ref{thm1}:}
Let $\epsilon>0$ be a parameter chosen at the end of the proof. Throughout the proof, we assume that that 
\begin{equation}\label{assep}\frac{16}{15}+2\epsilon<\frac{8}{7},\end{equation}
which is equivalent with $\epsilon <4/105$.
If there is a line containing at least $|A|^{7/15+\epsilon}$ points from $A$, then we obtain  by Corollary \ref{co99} that 
\begin{equation}\label{bigpl}  
|\Delta(A)| \gg \min \{|A|^{\frac{8}{15}+\frac{4\epsilon}{11}}, |A|^{\frac{8}{15}+\frac{1}{7}-\epsilon}\}.
\end{equation}

  Now we assume that there is no line supporting more than $|A|^{7/15+\epsilon}$ points from $A$. 

For any line $l$ in $\mathbb{F}_p^2$ defined by the equation $ax+by-c=0$, the vector $(a, b, c)$ is called a vector of parameters of $l$.

We first start with counting the number of triples $(z, x, y)\in A^3$ such that $||z-x||=||z-y||$, where $z=(a, b)$, $x=(x_1, x_2), y=(y_1, y_2)$. 

It follows from the equation $||z-x||=||z-y||$ that
\[(-2a)(x_1-y_1)+(-2b)(x_2-y_2)+(x_1^2+x_2^2)-(y_1^2+y_2^2)=0.\]
This equation defines a line in $\mathbb{F}_p^2$ with the parameters
\[(x_1, x_2, x_1^2+x_2^2)-(y_1, y_2, y_1^2+y_2^2)=(x_1-y_1, x_2-y_2, x_1^2+x_2^2-y_1^2-y_2^2).\]
Let $L$ be the set of these lines. It is clear that $L$ can be a multi-set. 

Let $Q$ be the set of points of the form $(x, y, x^2+y^2)$ with $(x, y)\in A$. We have $Q$ is a set on the paraboloid $z=x^2+y^2$ and $|Q|=|A|$.
 
Notice that the number of triples $(z, x, y) \in A^3$ with the property $||z-x||=||z-y||$ is equivalent to the number of incidences between lines in $L$ and points in $-2A:=\{(-2a_1, -2a_2): (a_1, a_2)\in A\}$. 

For each line $l$ in $L$, let $f(l)$ be the size of $l\cap (-2 A)$, and $m(l)$ be the multiplicity of $l$. Let $L_1$ be the set of distinct lines in $L$.

Thus, we have 
\begin{align*}
I(-2A, L)&=\sum_{l\in L_1}f(l)m(l)\\&
=\sum_{l\in L_1, f(l)\le |A|^{7/15-\epsilon}}f(l)m(l)+\sum_{l\in L_1,  |A|^{7/15-\epsilon}\le f(l)\le |A|^{7/15+\epsilon}}f(l)m(l)\\
&=I_1+I_2.
\end{align*}
We now bound $I_1$ and $I_2$ as follows. 

One can check that the size of $L$ is bounded by $|A|^2$, which implies that 
\[I_1\le |A|^{\frac{37}{15}-\epsilon}.\]

Let $L_2$ be the set of distinct lines $l$ in $L_1$ such that $|A|^{\frac{7}{15}-\epsilon}\le f(l)\le |A|^{\frac{7}{15}+\epsilon}$.

To bound $I_2$, we consider the following two cases:

{\bf Case $1$:} Suppose
\[\sum_{l\in L_2}m(l)\le |A|^{2-\frac{15\epsilon}{11}}.\]
We see that 
\[ I_2=\sum_{l\in L_2}f(l)m(l)\le |A|^{\frac{37}{15}-\frac{4\epsilon}{11}},\]
since any line in $L_2$ contains at most $|A|^{7/15+\epsilon}$ points.
Thus in this case we obtain that
\begin{equation}\label{MAE} I(-2A, L) =I_1+I_2 \le |A|^{\frac{37}{15}-\epsilon} + |A|^{\frac{37}{15}-\frac{4\epsilon}{11}} \ll |A|^{\frac{37}{15}-\frac{4\epsilon}{11}}.\end{equation}  
Now, for each $t\in \mathbb F_p,$ let $\nu(t)$ denote the number of pairs $(x,y)\in A^2$ such that $\|x-y\|=t.$ We have
$$\nu^2(t)=\left(\sum_{x,y\in A: \|x-y\|=t} 1\right)^2 = \left(\sum_{x\in A} 1\times (\sum_{y\in A: \|x-y\|=t}1)\right)^2.$$ 
By the Cauchy-Schwarz inequality, 
$$\nu^2(t)\le |A| \sum_{x\in A} \left(\sum_{y\in A: \|x-y\|=t}1\right)^2 = |A| \sum_{x,y,z\in A: \|x-y\|=t=\|x-z\|} 1.$$
Summing over $t\in \mathbb F_p,$ we obtain
$$ \sum_{t\in \mathbb F_p} \nu^2(t)\le |A| \sum_{x,y,z\in A: ||x-y\|=\|x-z\|} 1.$$
By the Cauchy-Schwarz inequality and the above inequality, we get
$$ \frac{|A|^4}{|\Delta(A)|} \le \sum_{t\in \mathbb F_p} \nu^2(t) \le |A|  \#\{(x,y,z) \in A^3 ; ||x-y||=||x-z||\}\ll |A| I(-2A, L).$$
Combining the above inequality with \eqref{MAE},
we obtain
\begin{equation}\label{verynice1} |\Delta(A)|\gg |A|^{\frac{8}{15}+\frac{4\epsilon}{11}}.\end{equation}

{\bf Case $2$:} Suppose
\[\sum_{l\in L_2}m(l)\ge |A|^{2-\frac{15\epsilon}{11}}.\]
By the Cauchy-Schwarz inequality and Theorem \ref{rud}, we have 
\begin{align}\label{eqx90}
&\# \left\lbrace(a-b, ||a||-||b||)\colon a, b\in A, (a-b, ||a||-||b||) ~\mbox{is a vector of parameters of a line in}~ L_2  \right\rbrace\\&\gg \frac{\left(\sum_{l\in L_2}m(l)\right)^2}{E(Q)}\nonumber
\\&\gg |A|^{\frac{11}{7}-\frac{30\epsilon}{11}}\nonumber.\end{align}

In the next step, we are going to show that 
\[|L_2|\le |A|^{1+\frac{15\epsilon}{4}}.\]
Indeed, since each line in $L_2$ contains at least $|A|^{7/15-\epsilon}$ points, the size of $L_2$ is at most $|A|^{16/15+2\epsilon}\ll |A|^{8/7}$. The last inequality follows from our assumption \eqref{assep}. Hence, we are able to apply Theorem \ref{ff} so that we have 
\[|A|^{\frac{7}{15}-\epsilon}|L_2|\le I(-2A, L_2)\le |A|^{11/15}|L_2|^{11/15},\]
which gives us that
\begin{equation}\label{eqx91}|L_2|\ll |A|^{1+\frac{15\epsilon}{4}}.\end{equation}

For each line $l\in L_2$, let $m'(l)$ be the number of distinct vectors $(a-b, ||a||-||b||)$ with $(a, b)\in A^2$ such that $(a-b, ||a||-||b||)$ is a vector of parameters of $l$. 

It follows from (\ref{eqx90}) and (\ref{eqx91}) that 
there exists $l\in L_2$ such that 
\begin{equation}\label{eqAdidaphat}m'(l)\gg |A|^{\frac{4}{7}-\frac{30\epsilon}{11}-\frac{15\epsilon}{4}}.\end{equation}

We now claim that $|\Delta(A)|\gg m'(l)$. Indeed, suppose that $l$ is determined by $m'(l)$ distinct vectors $(a_1-b_1, ||a_1||-||b_1||), \ldots, (a_{m'(l)}-b_{m'(l)}, ||a_{m'(l)}||-||b_{m'(l)}||)$. Then we have 
\begin{align*}
&(a_2-b_2, ||a_2||-||b_2||)=\lambda_2 \cdot (a_1-b_1, ||a_1||-||b_1||), \\
&(a_3-b_3, ||a_3||-||b_3||)=\lambda_3 \cdot (a_1-b_1, ||a_1||-||b_1||),\\
&...............................................................................\\
&(a_{m'(l)}-b_{m'(l)}, ||a_{m'(l)}||-||b_{m'(l)}||)=\lambda_{m'(l)}\cdot (a_1-b_1, ||a_1||-||b_1||),
\end{align*}
for some $\lambda_2, \ldots, \lambda_{m'(l)}\in \mathbb{F}_p$. Since the vectors $(a_1-b_1, ||a_1||-||b_1||), \ldots,$ and  $(a_{m'(l)}-b_{m'(l)}, ||a_{m'(l)}||-||b_{m'(l)}||)$ are distinct, we have $\lambda_2, \ldots, \lambda_{m'(l)}$ are distinct. On the other hand, we also have 
\[||a_2-b_2||=\lambda_2^2 \cdot ||a_1-b_1||, \ldots, ||a_{m'(l)}-b_{m'(l)}||=\lambda_{m'(l)}^2\cdot ||a_1-b_1||,\]
which gives us $|\Delta(A)|\ge \frac{m'(l)-1}{2}$, and the claim is proved. 

Hence, it follows from the equation (\ref{eqAdidaphat}) that 

\begin{equation}\label{case2R} |\Delta(A)| \gg |A|^{\frac{4}{7}-\frac{30\epsilon}{11}-\frac{15\epsilon}{4}}.\end{equation}

By \eqref{verynice1} of \textbf{Case 1} and \eqref{case2R} of \textbf{Case 2}, it follows that if no line contains more than $|A|^{\frac{7}{15}+\epsilon}$ points in $A$, then
$$ |\Delta(A)|\gg \min \left\{|A|^{\frac{8}{15}+\frac{4\epsilon}{11}},~|A|^{\frac{4}{7}-\frac{30\epsilon}{11}-\frac{15\epsilon}{4}} \right\}.$$

Finally, combining this fact with \eqref{bigpl} yields that
$$ |\Delta(A)|\gg \min\left\{ |A|^{\frac{8}{15}+\frac{4\epsilon}{11}},~|A|^{\frac{8}{15}+\frac{1}{7}-\epsilon},~|A|^{\frac{4}{7}-\frac{30\epsilon}{11}-\frac{15\epsilon}{4}} \right\}.$$

To deduce the desirable result, we consier the common solutions $(\epsilon, \delta)$ to the system of the following three inequalities:
$$ \frac{8}{15}+\frac{4\epsilon}{11} \ge \delta, ~~ \frac{8}{15}+\frac{1}{7}-\epsilon\ge \delta,~~\frac{4}{7}-\frac{30\epsilon}{11}-\frac{15\epsilon}{4}\ge \delta.$$
By a direct computation, we can obtain the largest $\delta=\frac{1128}{2107}$  for $\epsilon=\frac{176}{31605}.$ 
Thus, choosing $\epsilon=\frac{176}{31605}$ gives 
$$ |\Delta(A)|\gg |A|^\delta=|A|^{\frac{1128}{2107}},$$
which completes the proof. $\hfill\square$

\section{Distances between a set on a line and an arbitrary set in $\mathbb{F}_p^2$}
In this section, we will prove Theorem \ref{thm-any}. We first start with an observation as follows: if \[ |\Delta(P_1, P_2)| \gg \min \left\lbrace |P_2|^{8/7}|P_1|^{-1}, |P_2|^{7/8}\right\rbrace,\]
then we are done. So WLOG, we assume that 
\begin{equation}\label{eq919Adidaphat} |\Delta(P_1, P_2)| \ll \min \left\lbrace |P_2|^{8/7}|P_1|^{-1}, |P_2|^{7/8}\right\rbrace.\end{equation}
Hence, to prove Theorem \ref{thm-any}, it is sufficient to show that
\[ |\Delta(P_1, P_2)| \gtrsim
\min\left\lbrace |P_1|^{\frac{4}{11}} |P_2|^{\frac{4}{11}}, ~|P_1||P_2|^{1/8}\right\rbrace.\]
Since the distance function is preserved under translations and rotations, we can assume that the line is vertical passing through the origin, i.e. $P_1\subset \{0\}\times \mathbb{F}_p$. For the simplicity, we identify each point in $P_1$ with its second coordinate. The following lemma on a point-line incidence bound is known as a direct application of the K\H ovari--S\'os--Tur\'an theorem in \cite{bo}.
\bigskip
\begin{lemma}\label{cauchy-incidence}
Let $P$ be a set of $m$ points in $\mathbb{F}_p^2$ and $L$ be a set of $n$ lines in $\mathbb{F}_p^2$. We have 
\[I(P, L)\le \min\left\lbrace m^{1/2}n+m, n^{1/2}m+n\right\rbrace.\]
\end{lemma}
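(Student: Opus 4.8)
The plan is to establish the two bounds $I(P,L)\le m^{1/2}n+m$ and $I(P,L)\le n^{1/2}m+n$ separately, each by a Cauchy--Schwarz (K\H ovari--S\'os--Tur\'an) argument, and then take the minimum. The only structural input from the geometry of $\mathbb{F}_p^2$ is that the point--line incidence graph is $K_{2,2}$-free: two distinct points lie on at most one common line, and two distinct lines pass through at most one common point. These two facts are dual to one another, so the two claimed bounds are dual as well; it therefore suffices to carry out one of them in detail and deduce the other by interchanging the roles of points and lines.

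First I would fix, for each point $p\in P$, the number $r(p)$ of lines of $L$ incident to $p$, so that $I(P,L)=\sum_{p\in P}r(p)$. Counting ordered pairs of distinct lines through a common point and using that two distinct lines meet in at most one point, I obtain $\sum_{p\in P}\binom{r(p)}{2}\le\binom{n}{2}$, and hence $\sum_{p\in P}r(p)^2\le I(P,L)+n^2$. (The parallel-lines case, in which two lines share no point, only weakens this to an inequality, which is exactly the direction we need.)

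Next I would apply Cauchy--Schwarz in the form $I(P,L)^2=\big(\sum_{p\in P}r(p)\big)^2\le m\sum_{p\in P}r(p)^2\le m\big(I(P,L)+n^2\big)$, which gives the quadratic inequality $I(P,L)^2-m\,I(P,L)-mn^2\le 0$. Solving this and using the elementary estimate $\sqrt{m^2+4mn^2}\le m+2n\sqrt{m}$ to linearize the square root yields $I(P,L)\le m+m^{1/2}n$, the first of the two bounds.

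Finally, I would dualize: swapping the roles of points and lines is legitimate precisely because the $K_{2,2}$-free property is symmetric, and repeating the argument with the degrees of the lines (the number of points on each line) gives $I(P,L)\le n+n^{1/2}m$. Taking the minimum of the two bounds completes the proof. I do not expect a genuine obstacle here; the whole content is the $K_{2,2}$-freeness of incidences in $\mathbb{F}_p^2$, and the only steps needing a little care are the solution of the quadratic and the verification that the min is achieved correctly in each regime of the ratio $m/n$.
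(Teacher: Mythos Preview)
Your proof is correct and is precisely the standard Cauchy--Schwarz/K\H{o}v\'ari--S\'os--Tur\'an argument for $K_{2,2}$-free incidence graphs. The paper does not give its own proof of this lemma but simply cites it as a known consequence of the K\H{o}v\'ari--S\'os--Tur\'an theorem, so your write-up is exactly the intended argument spelled out in full.
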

For $x\in P_1$ and $P_2\subset \mathbb{F}_p^2$, we define
\[\mathcal{E}(P_2, x):=\#\left\lbrace \left((a, b), (c, d)\right)\in P^2_{2}\colon a^2+(b-x)^2=c^2+(d-x)^2\right\rbrace,\]
as the number of pairs of points in $P_2$ with the same distance to $x\in P_1$. In the next lemma, we will give an upper bound for $\sum_{x\in P_1}\mathcal{E}(P_2, x)$. 
\bigskip
\begin{lemma}\label{maillemma}
Let $P_1, P_2$ be sets as in Theorem \ref{thm-any}. Suppose that $|P_1|^{4/7}< |P_2|$ and $|P_2|\ll p^{{7}/{6}}$. Then we have
\[\sum_{x\in P_1}\E(P_2, x)\lesssim |P_1|^{7/11}|P_2|^{18/11}+|P_2|^{15/8}.\]
\end{lemma}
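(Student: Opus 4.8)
\textbf{Proof proposal for Lemma \ref{maillemma}.}

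The plan is to interpret $\sum_{x \in P_1} \E(P_2, x)$ as an incidence count between a carefully chosen point set and a line set, and then to split the estimate according to whether one is in the Stevens--de Zeeuw regime or not. First I would expand the defining equation $a^2 + (b-x)^2 = c^2 + (d-x)^2$ for a pair $((a,b),(c,d)) \in P_2^2$: it linearizes in $x$ to $2x(b - d) = (a^2 + b^2) - (c^2 + d^2)$. So for each ordered pair $((a,b),(c,d))$ with $b \neq d$, there is exactly one $x$ solving the equation, while if $b = d$ the equation becomes $a^2 = c^2$, i.e.\ $a = \pm c$. The contribution of the degenerate pairs with $b = d$ is at most roughly $|P_1| \cdot |P_2|$ after summing over $x \in P_1$ (for each $x$, at most $2|P_2|$ such pairs since for each $(a,b) \in P_2$ there are at most two choices of $c$ and then $d = b$), and one checks this is dominated by the right-hand side under the hypothesis $|P_1|^{4/7} < |P_2|$; I will need to record that inequality. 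For the main (non-degenerate) term, the point is that $\sum_{x \in P_1} \E(P_2,x)$ counts, up to the degenerate pieces, the number of quadruples $((a,b),(c,d),x)$ with $b \neq d$, $x \in P_1$, and $2x(b-d) = (a^2+b^2)-(c^2+d^2)$; this I want to realize as $I(\mathcal{P}, \mathcal{L})$ for a point set $\mathcal{P}$ of size about $|P_1||P_2|$ and a line set $\mathcal{L}$ of size about $|P_2|$, or some symmetric variant.

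Concretely, I would set things up so that each element of $P_2$ contributes a \emph{line} and each pair $(x, \text{element of } P_2)$ contributes a \emph{point}, in the plane of ``parameters'' $(u, v)$ where the equation $2x u = v - (c^2 + d^2 \text{-type quantity})$ is affine-linear; one natural choice is to map $(a,b) \in P_2$ to the point $(b, a^2+b^2)$ and, for $x \in P_1$ and $(c,d) \in P_2$, to the line $\{(u,v) : v = 2x(u - d) + (c^2+d^2) + \ldots\}$ — I will fix the exact affine form while doing the computation so that an incidence is exactly a solution of the linearized distance equation. Then $\sum_{x\in P_1}\E(P_2,x)$ is (up to degenerate terms) the incidence number between roughly $|P_2|$ points and roughly $|P_1||P_2|$ lines. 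Now I apply two incidence bounds and take the better: Lemma \ref{ff} of Stevens--de Zeeuw gives the $m^{11/15} n^{11/15}$ bound, which with $m \sim |P_2|$ and $n \sim |P_1||P_2|$ produces the term $|P_1|^{11/15}|P_2|^{22/15}$ — but the claimed bound has $|P_1|^{7/11}|P_2|^{18/11}$, so I should instead apply Lemma \ref{ff} with the roles/sizes arranged so that the smaller set has size $\sim |P_1||P_2|^{?}$; I expect the correct setup gives $m\sim |P_1||P_2|$ points and $n \sim |P_2|$ lines (or vice versa) and then the constraint $m^{7/8}\le n\le m^{8/7}$ forces the parameter range, yielding one term, while Lemma \ref{cauchy-incidence} (the Kővári--Sós--Turán bound $n^{1/2}m + n$) yields the $|P_2|^{15/8}$ term. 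The two hypotheses $|P_1|^{4/7} < |P_2|$ and $|P_2| \ll p^{7/6}$ are precisely what is needed to verify the side conditions of Lemma \ref{ff} (the range $m^{7/8}\le n\le m^{8/7}$ and $m^{-2}n^{13}\ll p^{15}$) and to absorb the degenerate contributions.

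The main obstacle I anticipate is \emph{bookkeeping the incidence setup so that the exponents come out exactly as stated} — in particular, getting $|P_1|^{7/11}|P_2|^{18/11}$ rather than the naive $|P_1|^{11/15}|P_2|^{22/15}$. This means I must be careful about which collection plays the role of ``points'' and which ``lines'' in Lemma \ref{ff}, and I must check the admissibility window $m^{7/8} \le n \le m^{8/7}$ holds in the relevant regime (this is where the hypothesis $|P_1|^{4/7} < |P_2|$ enters: it should be equivalent, up to the ranges we care about, to one side of this window). A secondary technical point is handling multiplicities: the map $(a,b) \mapsto (b, a^2+b^2)$ is two-to-one on fibers over the first coordinate, and $P_1$ may not be ``generic'', so I need to verify that no single line in the constructed family is overcounted more than $O(1)$ times, or else pass to a sub-multiset and lose only constants. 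Once the incidence picture is correct, combining the two bounds via $\min\{\cdot,\cdot\} \le (\text{term}_1) + (\text{term}_2)$ and adding the degenerate $O(|P_1||P_2|)$ term — which is $\ll |P_2|^{15/8}$ since $|P_1| \ll |P_2|^{7/4}$ follows from $|P_1|^{4/7} < |P_2|$ — finishes the proof.
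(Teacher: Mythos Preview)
Your setup is right up to the point where you identify $T:=\sum_{x\in P_1}\E(P_2,x)$ with a (multiset) incidence count between the point set $P=\{(b,a^2+b^2):(a,b)\in P_2\}$ and the family of lines $\ell_{x,(c,d)}:\ v=2xu-2xd+(c^2+d^2)$. The gap is in the sentence ``I need to verify that no single line in the constructed family is overcounted more than $O(1)$ times''. That is false, and this is exactly why the direct application of Lemma~\ref{ff} or Lemma~\ref{cauchy-incidence} cannot produce the stated exponents. Two pairs $(x,(c,d))$ and $(x,(c',d'))$ give the same line if and only if $c^2+(d-x)^2=c'^2+(d'-x)^2$; hence the multiplicity of a line with slope $2x$ and intercept determined by $\lambda$ is precisely $r_{P_2}(x,\lambda)$, the number of points of $P_2$ at distance $\lambda$ from $(0,x)$. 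If these multiplicities were $O(1)$ you would get $T=\sum r^2\ll\sum r\le |P_1||P_2|$, which is far stronger than the lemma. In other words, $T$ is a \emph{second moment} $\sum_{(x,\lambda)}r_{P_2}(x,\lambda)^2$, while any incidence bound applied to the deduplicated line set controls only the first moment $\sum r_{P_2}(x,\lambda)$.

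The paper's argument supplies the missing idea: write $T=\sum_{(x,\lambda)}r_{P_2}(x,\lambda)^2$, fix a threshold $t=|P_2|^{7/11}|P_1|^{-4/11}$, and split according to $r_{P_2}(x,\lambda)<t$ or $\ge t$. The small part is bounded trivially by $t\sum r\le t|P_1||P_2|=|P_1|^{7/11}|P_2|^{18/11}$, which is where the exponents you were puzzled by come from. For the large part one lets $R_t=\{(x,\lambda):r_{P_2}(x,\lambda)\ge t\}$, identifies $R_t$ with a \emph{set} of lines in the $(u,v)$-plane, dyadically decomposes by richness into $L_i=\{\ell:2^it\le |\ell\cap P|<2^{i+1}t\}$, and applies Lemma~\ref{ff} or Lemma~\ref{cauchy-incidence} to each $L_i$ to bound $|L_i|$, hence $\sum_{L_i}(2^it)^2$. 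This yields the $|P_2|^{15/8}$ term (the cases $|L_i|>|P|^{8/7}$ and $2^it\ge |P|^{7/8}$ are disposed of using the standing assumption~\eqref{eq919Adidaphat}). So the fix is not a matter of choosing which side is ``points'' and which is ``lines''; it is the threshold-plus-dyadic mechanism for passing from an $\ell^1$ incidence bound to an $\ell^2$ energy bound.
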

\begin{proof}
For $x\in P_1$ and $\lambda\in \mathbb{F}_p$, let $r_{P_2}(x, \lambda)$ be the number of points $(a, b)$ in $P_2$ such that $a^2+(b-x)^2=\lambda$. Then we have 
\[T:=\sum_{x\in P_1}\E(P_2, x)=\sum_{(x, \lambda)\in P_1\times \mathbb{F}_p}r_{P_2}(x, \lambda)^2.\]
Let $t=\frac{|P_2|^{7/11}}{|P_1|^{4/11}}> 1$, and let $R_t$ be the number of pairs $(x, \lambda)\in P_1\times \mathbb{F}_p$ such that $r_{P_2}(x, \lambda)\ge t$. We have 
\[T=\sum_{(x, \lambda)\not \in R_t}r_{P_2}(x, \lambda)^2+\sum_{(x, \lambda)\in R_t}r_{P_2}(x, \lambda)^2=I+II.\]
Since $\sum_{(x, \lambda)\not\in R_t}r_{P_2}(x, \lambda)\le |P_1||P_2|$ and $r_{P_2}(x, \lambda)<t$ for any pair $(x, \lambda)\not\in R_t$, we have \[I\le t|P_1||P_2|=|P_1|^{7/11}|P_2|^{18/11}.\]

In the next step, we will bound $II$. 

From the equation $\lambda=a^2+(b-x)^2$, we have 
\[a^2+b^2=2bx-x^2+\lambda.\]
Let $P$ be the set of points $(b, a^2+b^2)$ with $(a, b)\in P_2$, and $L$ be the set of lines defined by $y=2ux-u^2+v$ with $(u, v)\in R_t$. We have $|L|=|R_t|$ and $|P|\sim |P_2|$. 

With these definitions,  we observe that $II$ can be viewed as the number of pairs of points in $P$ on lines in $L$. 

We partition $L$ into at most $\log (|P|)$ sets of lines $L_i$ as follows:
\[L_i=\{l\in L\colon 2^it\le |l\cap P|<2^{i+1}t\},\]
and let $II(L_i)$ denote the number of pairs of points in $P$ on lines in $L_i$. 

For each $i$, we now consider the following cases:

{\bf Case $1$:} $|P|^{1/2}<|L_i|\le |P|^{7/8}$.  It follows from Lemma \ref{cauchy-incidence} that 
\[2^it|L_i|\le I(P, L_i)\le |P|^{1/2}|L_i|+|P|\ll |P|^{1/2}|L_i|,\]
which leads to that $2^it\le |P|^{1/2}$. Thus 
\[II(L_i)\ll  |L_i| \left(|P|^{1/2}\right)^2\ll |P|^{15/8}\sim |P_2|^{15/8}.\]

{\bf Case $2$:}  $|P|^{7/8}\le |L_i| \le |P|^{8/7}$.  It follows from Lemma \ref{ff} that
\[2^it|L_i|\le I(P, L_i)\le |L_i|^{11/15}|P|^{11/15}.\]
This implies that 
\[|L_i|\le \frac{|P|^{11/4}}{(2^it)^{15/4}}.\]
In this case, we have
\[II(L_i)\le \frac{|P|^{11/4}}{(2^it)^{15/4}}\cdot 2^{2i+2}t^2\ll \frac{|P|^{11/4}}{(2^it)^{7/4}}\sim \frac{|P_2|^{11/4}}{(2^it)^{7/4}}.\]
One can check that the condition $m^{-2}n^{13}\ll p^{15}$ in the Theorem \ref{ff} is satisfied once $|P|\le p^{{7}/{6}}$.

{\bf Case $3$:} $|L_i|\le |P|^{1/2}$. Applying Lemma \ref{cauchy-incidence} again, we obtain
\begin{equation}\label{eqxAdidaphat}2^it|L_i|\le I(P, L_i)\le  |P|^{1/2}|L_i|+|P|\ll |P|.\end{equation}

If $2^it\ge |P|^{7/8}$, then there is at least one line in $L$ which has at least $|P|^{7/8}$ points from $P$, which follows that there exists $(x, \lambda)\in R_t$ such that the circle centered at $(0, x)$ of radius $\lambda$ contains at least $|P|^{7/8}\sim |P_2|^{7/8}$ points from $P_2$. This implies that 
\[|\Delta(P_1, P_2)|\gg |P_2|^{7/8},\]
which contradicts to our assumption (\ref{eq919Adidaphat}).

Thus, we can assume that $2^it\ll |P|^{7/8}$. With this condition, we have 
\[II(L_i)\ll |L_i|(2^it)^2\ll 2^it \cdot (|L_i|(2^it))\ll |P|^{15/8}\sim |P_2|^{15/8},\]
where we have used the inequality (\ref{eqxAdidaphat}) in the last step.

{\bf Case $4$:} $|L_i|\ge |P|^{8/7}$. In this case, by the pigeon-hole principle, there is a point $x$ in $P_1$ that determines at least $|P_2|^{8/7}/|P_1|$ lines, and each of these lines contains at least one point from $P$. This implies that 
\[ |\Delta(P_1, P_2)| \gg |P_2|^{8/7}|P_1|^{-1},\]
which contradicts to our assumption (\ref{eq919Adidaphat}).

Putting these cases together, and taking the sum over all $i$, we obtain 
\[T\lesssim |P_1|^{7/11}|P_2|^{18/11}+|P_2|^{15/8}.\]
This completes the proof of the lemma.
\end{proof}
We are ready to prove Theorem \ref{thm-any}.
\paragraph{Proof of Theorem \ref{thm-any}:}
As in the beginning of this section,  if \[ |\Delta(P_1, P_2)| \gg \min \left\lbrace |P_2|^{8/7}|P_1|^{-1}, |P_2|^{7/8}\right\rbrace,\]
then we are done. Thus, we might assume that 
\[ |\Delta(P_1, P_2)| \ll \min \left\lbrace |P_2|^{8/7}|P_1|^{-1}, |P_2|^{7/8}\right\rbrace.\]
Let $N$ be the number of quadruples $(p_1, p_2, p_1', p_2')\in P_1\times P_2\times P_1\times P_2$ such that 
\[||p_1-p_2||=||p_1'-p_2'||.\]

Let $T$ be the number of triples $(p_1, p_2, p_2')\in P_1\times P_2\times P_2$ such that 
$||p_1-p_2||=||p_1-p_2'||$. As in the proof of Lemma \ref{maillemma}, we have 
\[T\lesssim |P_1|^{7/11}|P_2|^{18/11}+|P_2|^{15/8}.\]
By the Cauchy-Schwarz inequality, we have 
\[N\ll |P_1|T\lesssim |P_1|^{18/11}|P_2|^{18/11}+|P_1||P_2|^{15/8}.\]
By the Cauchy-Schwarz inequality again, one can show that $\frac{|P_1|^2|P_2|^2}{|\Delta(P_1, P_2)|} \leq N$. Thus we have 
\[ |\Delta(P_1, P_2)| \gtrsim \min \{|P_1|^{4/11}|P_2|^{4/11}, ~|P_1||P_2|^{1/8}\}.\]
This ends the proof of the theorem. $\hfill\square$

\end{document}